\newtheorem{theorem}{Theorem}
\newtheorem*{theorem*}{Theorem}
\newcommand{\tn}{|\kern-.10em|\kern-.10em|}
\def\om{\Omega}
\def\({\big(}
\def\){\big)}
\def\Qbar{\hbox{\sl Q\kern-.45em{\vrule height.63em width.05em
depth-.033em}}~}
\def\qbar{{{\scriptstyle Q}\kern-.45em{\vrule height.41em width.035em
depth-.03em}}~}
\def\Cbar{\hbox{\sl C\kern-.35em{\vrule height.63em width.05em
depth-.033em}}~}
\def\cbar{{{\scriptstyle C}\kern-.41em{\vrule height.42em width.035em
depth-.03em}}~}
\def\ibid{\hbox to.5truein{\hrulefill}}
\def\IN{\hbox{{\rm I}\kern-.13em{\rm N}}}
\def\IR{\hbox{{\rm I}\kern-.13em{\rm R}}}
\def\vvs{\vspace{2\jot}}
\def\hdotfill{\leaders\hbox to 1em{\hss.\hss}\hfill}
\def\hrf{\hbox to.75in{\hrulefill}}
\newcommand{\clsp}{{\overline{\textnormal{span}}}^{w^*}}
\newcommand{\kve}{\mathbb{Q}}
\newcommand\rls{\mathbb R}
\def\om{\omega_1}
\begin{document}

\title{Mazur Intersection Property For Asplund Spaces}

\author{Miroslav Ba\v c\' ak}
\address{Department of Mathematical Analysis\\Charles University\\Sokolovsk\'a 83\\186 75 Praha 8\\Czech Republic}
\email{bacak@karlin.mff.cuni.cz}
\author{Petr H\'ajek}
\address{Mathematical Institute\\Czech Academy of Science\\\v Zitn\'a 25\\115 67 Praha 1\\Czech Republic}
\email{hajek@math.cas.cz}
\date{March 2008}
\thanks{Supported by grants: Institutional Research Plan AV0Z10190503, GA\v CR 201/07/0388, GA UK 72 407,
GA \v CR 201/07/0394}
\keywords{Mazur intersection property, Asplund space, Martin's Maximum axiom, fundamental biorthogonal system}
\subjclass[2000]{46B03}
\begin{abstract}
The main result of the present note states that
 it is consistent with the ZFC axioms of set theory
(relying on  Martin's Maximum MM axiom), that every Asplund
space of density character $\om$ has a renorming with the Mazur intersection property.
Combined with the previous result of Jim\' enez and Moreno (based upon the work of Kunen under the continuum hypothesis)
 we obtain that the MIP renormability of Asplund spaces of
density $\om$ is undecidable in ZFC.

\end{abstract}

\maketitle

The Mazur intersection property (MIP for short)
 was first investigated by S. Mazur  in \cite{M} as a purely geometrical isometric property of
a Banach space, and has since been studied extensively over the years. An early result of Mazur  claims that
a Banach space with a Fr\' echet differentiable norm (necessarily an Asplund space)
has the MIP (\cite{M}). Phelps \cite{P}
proved that a separable Banach space has a MIP renorming if and only if its dual is separable, or equivalently, if it is
an Asplund space. Much of the further progress in the theory depended on an important characterization of MIP,
due to Giles, Gregory and Sims \cite{GGS}, by the property that $w^*$-denting points of $B_{X^*}$ are norm dense
in $S_{X^*}$.
This result again suggests a close connection of MIP to Asplund spaces, as the latter can be characterized in a similar way
as spaces such that bounded subsets of their dual are $w^*$-dentable.
It has opened a way to applying biorthogonal systems to the MIP.
Namely, Jim\' enez and Moreno \cite{JM} have proved that if a Banach space $X^*$ admits a fundamental biorthogonal system
$\{(x_\alpha, f_\alpha)\}$, where $f_\alpha$ belong to $X\subset X^{**}$, then $X$ has a MIP renorming.
As a corollary to this criterion (\cite{JM}), they get
 that every Banach space can be embedded into a Banach space which is MIP renormable,
a rather surprising result which in particular strongly shows that MIP and Asplund properties, although closely related,
are distinct. We remark that MIP has connections with other parts of Banach space theory, such as the ball topology
(\cite{gk}) etc.

On the other hand, all
 known examples of an Asplund space without MIP renorming (\cite{JM}, \cite{bos}), such as $C(K)$ space, where $K$ is
a compact space constructed by Kunen (\cite{N}), were constructed using additional set theoretical assumptions, such as
the continuum hypothesis CH (or $\clubsuit$). In fact $C(K)$, where $K$ is Kunen's compact, has density character $\om$,
the smallest density for which such a result may hold in view of Phelps' theorem.

\vvs

The main result of the present note goes in the positive direction. Namely,
 it is consistent with the ZFC axioms of set theory
(relying on a powerful additional axiom - Martin's Maximum MM \cite{fms}), that every Asplund
space of density character $\om$ has a renorming with the Mazur intersection property.
 We thank V. Zizler for suggesting the problem and sharing his insights with us.
Combined with the previous result of Jim\' enez and Moreno we obtain that the MIP renormability of Asplund spaces of
density $\om$ is undecidable in ZFC.

\vvs

Our proof has three main ingredients. It combines a recent result of Todor\v cevi\' c,
claiming that under MM every Banach space with density
$\om$ has a fundamental system, with the machinery of projectional generators, and a criterion of MIP renormability
due to Jim\' enez and Moreno via fundamental systems.
We recall that MM is an axiom consistent with
ZFC, see \cite{fms} for details. We prefer not to state MM explicitly, as we do not enter into any axiomatic arguments here,
 and our reliance on MM is solely
through the use of Todor\v cevi\' c's theorem in \cite{tod} (see also \cite{bos}).
In order to explain in some detail  the role of these results, we need some preliminary definitions.

\vvs

Let $(X,\|\cdot\|)$ be a Banach space, denote its closed unit ball $B_X$ and its unit sphere $S_X.$
More generally, $B(x,\rho)=\{y\in X: \|y-x\|\le\rho\}$ is a ball centered at $x$ with radius $\rho\ge0$.
 The dual of $X$ is denoted $X^*,$
and by $w^*$ we mean the weak$^*$ topology on $X^*,$ that is $\sigma(X^*,X).$
The symbol $\tau_p$ stands for the topology of pointwise convergence in $c_0(\om).$ The set of rational numbers and the linear hull with rational coefficients are denoted $\kve, \: \kve$--span, respectively.

A Banach space $X$ is an \textit{Asplund space} if every separable subspace of $X$ has a separable dual space. We say that
a Banach space $(X,\|\cdot\|)$ has the \textit{Mazur intersection property} if every bounded closed convex set $K$ is an
intersection of closed balls. More precisely, $K=\cap_\alpha B(x_\alpha,\rho_\alpha)$, for some system of points
$x_\alpha\in X$, and radii $\rho_\alpha$.

Let $X$ be a Banach space and $\Gamma$ a nonempty set. A family $\{(x_{\gamma},x_{\gamma}^* )\}_{\gamma \in \Gamma}$
of pairs  $x_\gamma\in X$, $x_\gamma^*\in X^*$ is called a \textit{biorthogonal system} in $X \times X^*$ if
$x_{\alpha}^*(x_{\beta})=\delta_{\alpha \beta},$ where $ \delta_{\alpha \beta}$ is Kronecker's delta, for all
$\alpha,\beta \in \Gamma.$ A biorthogonal system is \textit{fundamental} if
$\overline{\text{span}} \{x_{\gamma}\}_{\gamma \in \Gamma} = X.$

If $A$ is a subset of a Banach space $X$ and $B \subset X^*,$ we say
that $A$ \textit{countably supports} $B,$ (or that $B$ is \textit{countably supported} by $A$), provided
$\{ a \in A:\: b(a)\neq 0 \}$ is at most countable, for all $b\in B.$ A subset $N \subset X^*$ is \textit{1--norming} if
$\|x\|= \sup\{x^*(x):\: x^* \in N \cap B_{X^*}\},$ for all $x \in X.$

For a nonseparable Banach space $X$ of density $\mu$, we define \textit{projectional resolution of identity (PRI)} as a
family $\{P_{\alpha}:\: \omega \leq \alpha \leq \mu \}$ of linear projections on $X$ such that $P_{\omega}=0,$ $P_{\mu}$
is the identity mapping on $X,$ and for all $\omega \leq \alpha,\:\beta \leq \mu:$
\begin{enumerate}
 \item $\|P_{\alpha}\|=1,$
 \item $\text{dens} P_{\alpha} X \leq \text{card} \alpha,$
 \item $P_{\alpha}P_{\beta}=P_{\beta}P_{\alpha} = P_{\alpha},$ if $\alpha \leq \beta,$
 \item $\cup_{\alpha<\beta} P_{\alpha+1} X$ is norm dense in $P_{\beta}X,$ if $\omega<\beta.$
\end{enumerate}

A standard way of obtaining PRI on Banach spaces is using the  Projectional Generator. This technique has an
advantage of allowing some additional properties for the PRI, important in our proof.

Let $X$ be a Banach space and $W \subset X^*$ be 1--norming set closed under linear combinations with rational coefficients.
 Let $\Phi :W \rightarrow 2^{X}$ be an at most countably valued mapping such that for every nonempty set $B \subset W$
with linear closure, $\Phi(B)^{\perp} \cap \overline{B}^{w^*} = \{ 0 \}.$ Then the couple $(W,\Phi)$ is called a
\textit{projectional generator (PG).}

Let $\{P_{\alpha}:\: \omega \leq \alpha \leq \mu \}$ be a PRI on $X$, $G\subset X$. We will say that the given PRI is
subordinated to the set $G$ if $P_\alpha(x)\in\{0,x\}$ for all $\alpha$ and $x\in G$.

We are now going to present the main theorems, whose combination leads to the proof of our main result.

\begin{theorem}\cite[Theorem 3.42]{bos}

Let $X$ be a nonseparable Banach space with a projectional generator $(W,\Phi)$, and a set $G\subset X$ that countably
supports $W$. Then $X$ has a PRI $\{P_{\alpha}:\: \omega \leq \alpha \leq \mu \}$, which is subordinated to the set $G$.
\label{subo}
\end{theorem}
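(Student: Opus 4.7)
The plan is a transfinite induction that upgrades the classical PRI-from-projectional-generator construction with one extra closure rule tailored to $G$. Fix norm-dense subsets $\{y_\alpha\}_{\alpha<\mu}\subset X$ and $\{g_\alpha\}_{\alpha<\mu}\subset W$, and build nested pairs $(A_\alpha,B_\alpha)$ by induction on $\omega\le\alpha\le\mu$, with $A_\alpha\subset X$ and $B_\alpha\subset W$ both $\kve$-linear, satisfying: (i) $|A_\alpha|,|B_\alpha|\le|\alpha|$; (ii) $y_\beta\in A_\alpha$ and $g_\beta\in B_\alpha$ for every $\beta<\alpha$; (iii) $\Phi(B_\alpha)\subset A_\alpha$; (iv) $B_\alpha$ is $1$--norming on $\overline{A_\alpha}$; (v) $\{x\in G:f(x)\ne 0\}\subset A_\alpha$ for every $f\in B_\alpha$; and at limit $\alpha$, $A_\alpha=\bigcup_{\beta<\alpha}A_\beta$ and $B_\alpha=\bigcup_{\beta<\alpha}B_\beta$. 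At a successor step I iterate $\omega$ times the saturation operation of taking $\kve$-linear combinations, adjoining $\Phi(f)$ for each current $f$, picking countably many $W$-witnesses of the norm for each current $x$, and---the new ingredient---adjoining $\{x\in G:f(x)\ne 0\}$ for each current $f$. Each of these operations adds only countably many elements per current element, using crucially both that $\Phi$ is countably valued and that $G$ countably supports $W$; this preserves (i).

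With the pairs in hand, I define $P_\alpha$ by the standard duality. The projectional generator property $\Phi(B_\alpha)^\perp\cap\overline{B_\alpha}^{w^*}=\{0\}$, combined with (iii) and the norming clause (iv), yields $\overline{A_\alpha}\cap B_\alpha^{\perp}=\{0\}$ and $\overline{A_\alpha}+B_\alpha^{\perp}=X$; take $P_\alpha$ to be the projection onto $\overline{A_\alpha}$ along $B_\alpha^{\perp}$. Condition (iv) gives $\|P_\alpha\|=1$, (i) gives the density bound on $P_\alpha X$, nestedness of the pairs gives $P_\alpha P_\beta=P_\beta P_\alpha=P_\alpha$ for $\alpha\le\beta$, and the limit clause gives density of $\bigcup_{\alpha<\beta}P_{\alpha+1}X$ in $P_\beta X$.

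To verify subordination, fix $x\in G$ and $\omega\le\alpha\le\mu$. If $f(x)=0$ for every $f\in B_\alpha$, then $x\in B_\alpha^{\perp}=\ker P_\alpha$, so $P_\alpha(x)=0$. Otherwise some $f\in B_\alpha$ satisfies $f(x)\ne 0$; condition (v) then forces $x\in A_\alpha\subset\overline{A_\alpha}$, which is the range of $P_\alpha$, whence $P_\alpha(x)=x$. Either way $P_\alpha(x)\in\{0,x\}$, which is the desired subordination.

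The principal obstacle is the bookkeeping inside the saturation step: all five closure conditions must be maintained simultaneously while the cardinality stays at $|\alpha|$, and this has to be checked also at limit ordinals of countable cofinality, where a uniform-boundedness argument is required to conclude that the direct limit of the $(A_\beta,B_\beta)$ still satisfies the norming condition (iv). Everything else is the classical projectional-generator argument; the sole new point is (v), which is the only place where the hypothesis that $G$ countably supports $W$ enters.
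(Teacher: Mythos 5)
Your proposal is correct and follows essentially the same route as the paper, which simply cites the standard projectional-generator construction of \cite[Theorem 3.42]{bos}: saturate pairs $(A_\alpha,B_\alpha)$ under $\Phi$, countable norming witnesses from $W$, rational spans, and the supports $\{x\in G:\ f(x)\neq 0\}$ for $f\in B_\alpha$ (in \cite{bos} this last step is packaged as enlarging $\Phi$ by these supports, which remains countably valued precisely because $G$ countably supports $W$), and then take $P_\alpha$ with range $\overline{A_\alpha}$ and kernel $B_\alpha^{\perp}$, exactly as you do. Two cosmetic points only: the requirement that the $B_\alpha$ absorb a norm-dense family $\{g_\beta\}_{\beta<\mu}\subset W$ is superfluous (and unavailable if $\text{dens}\,W>\mu$), and one should start with $A_\omega=\{0\}$ so that $P_\omega=0$ as the paper's definition of a PRI requires; neither affects the argument, and at limit stages no uniform-boundedness is needed, since the $1$--norming condition passes to unions and closures by a direct approximation.
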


We shall use the following theorem of Jim\' enez and Moreno (\cite{JM}, see also \cite[Theorem 8.42]{bos}).
\begin{theorem}
 Let $X^*$ be a dual Banach space with a fundamental biorthogonal system
$\{(x_{\alpha}^*,x_{\alpha})\}_{\alpha < \om} \subset X^* \times X^{**}$, with the property that
$x_\alpha\in X\subset X^{**}$. Then $X$ admits an equivalent norm with the
Mazur intersection property. \label{bosmip}
\end{theorem}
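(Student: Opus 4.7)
The plan is to reduce the theorem to the Giles--Gregory--Sims characterization of MIP: a Banach space $X$ has MIP in a given norm if and only if the set of $w^*$-denting points of $B_{X^*}$ is norm-dense in $S_{X^*}$. The goal then becomes to construct an equivalent norm on $X$ whose dual norm $\tnorm{\cdot}$ on $X^*$ has a norm-dense set of $w^*$-denting points on its unit sphere.

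First, using the fundamental biorthogonal system $\{(x_\alpha^*, x_\alpha)\}_{\alpha<\om}$, I would build an equivalent dual norm of schematic form
\[
\tnorm{x^*}^2 = \|x^*\|^2 + Q(x^*),
\]
where $Q$ is a convex, $w^*$-lower semicontinuous functional assembled from weighted squares of the coordinate functionals $x^* \mapsto x^*(x_\alpha)$ (for instance, as a suitable series in countable enumerations of finite rational combinations, or as a $c_0$-type supremum over finite subsets of $\Gamma$). The decisive use of the hypothesis is that every $x_\alpha$ lies in $X \subset X^{**}$, not merely in $X^{**}$, so the evaluation $x^* \mapsto x^*(x_\alpha)$ is genuinely $w^*$-continuous. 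This preserves the $w^*$-lower semicontinuity of $Q$, ensuring that $\tnorm{\cdot}$ is a dual norm. Weights and the enumeration must be chosen so that $Q$ converges everywhere and is equivalent to a multiple of $\|\cdot\|^2$.

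Second, I would check that the normalized vectors $\hat x_\alpha^* := x_\alpha^*/\tnorm{x_\alpha^*}$ are $w^*$-denting points of the new dual ball. Because $x_\alpha \in X$, the half-space
\[
H_\de = \{y^* \in X^* : y^*(x_\alpha) > \hat x_\alpha^*(x_\alpha) - \de\}
\]
is genuinely $w^*$-open and contains $\hat x_\alpha^*$. The quadratic dependence of $\tnorm{\cdot}^2$ on the coordinate $x^*(x_\alpha)$ (with a positive weight baked into $Q$) forces any $y^* \in H_\de \cap B_{(X^*,\tnorm{\cdot})}$ to have that coordinate close to its maximum and its $\tnorm{\cdot}$-remainder small, so the slice has diameter tending to zero as $\de\to 0$. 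This is the standard uniform-rotundity-in-one-direction computation, localised to a single biorthogonal coordinate.

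Finally, density of the set of $w^*$-denting points in the new unit sphere must be obtained from the fundamentality of the system. Since $\overline{\mathrm{span}}\{x_\alpha^*\}_{\alpha<\om} = X^*$, rational linear combinations of the $x_\alpha^*$ are norm-dense in $X^*$, and a refinement of the slicing argument---using a finite combination of the coordinates $x^*(x_\beta)$ for $\beta$ in the support of the combination---should promote such combinations, after normalisation, to $w^*$-denting points of the new dual ball. The main obstacle lies precisely in assembling the functional $Q$ and its weights so that the following four properties hold simultaneously in the nonseparable setting $|\Gamma|=\om$: everywhere convergence, equivalence to $\|\cdot\|^2$ up to a constant, $w^*$-lower semicontinuity, and strong rotundity along a norm-dense family of directions in $X^*$.
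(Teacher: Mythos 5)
First, a point of orientation: the paper does not prove Theorem~\ref{bosmip} at all --- it is quoted from Jim\'enez-Sevilla and Moreno \cite{JM} (see also \cite[Theorem 8.42]{bos}) --- so your attempt has to be measured against that cited proof, and it does not reconstruct it. The decisive flaw is your second step. From $y^*\in H_\delta\cap B$ you know only the scalar $y^*(x_\alpha)$ (up to $\delta$) and the bound $\|y^*\|^2+a\,y^*(x_\alpha)^2\le 1$; this gives no control whatsoever of $y^*$ in the directions complementary to $x_\alpha$, so the diameter of the slice does not tend to $0$. ``Uniform rotundity in one direction'' never produces small slices: for instance, if $\|\cdot\|$ is a sup-type norm and one passes to the ball $\{y^*:\|y^*\|^2+a\,y^*(x_\alpha)^2\le 1\}$, the slices determined by $x_\alpha$ shrink, as $\delta\to 0$, to the face $\{y^*: y^*(x_\alpha)=(1+a)^{-1/2},\ \|y^*\|\le (1+a)^{-1/2}\}$, whose diameter is comparable to $1$; the normalized $x_\alpha^*$ sits inside this face and is not a $w^*$-denting point. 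Making a point $w^*$-denting requires simultaneous control in \emph{all} directions (essentially Fr\'echet-type smoothness of the predual norm at the exposing functional), which a one-coordinate quadratic bump cannot supply. Moreover, the obstacle you yourself flag in step one is fatal as stated: an uncountable family of strictly positive weights is never summable, and a fixed $x^*$ may have uncountably many nonzero coordinates $x^*(x_\alpha)$, so no series-type $Q$ over $\omega_1$ coordinates is everywhere finite with all weights active; the $c_0$-type (Day-norm) alternative would need the coordinate map $x^*\mapsto(x^*(x_\alpha))_\alpha$ to land in $c_0(\Gamma)$, which is exactly what fails in general (this is the point emphasized in the closing remark of the present paper). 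Finally, the whole rotundification strategy is the separable (dual LUR) argument in disguise, and it cannot extend: by \cite{JM} the hypothesis of the theorem is satisfied by spaces that are very far from Asplund, while dual norms rotund along a dense family of directions force Fr\'echet smoothness phenomena in the predual.

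The cited proof proceeds by a different mechanism, which is what your steps 1 and 3 would have to be replaced by. One keeps the Giles--Gregory--Sims criterion, but instead of adding a convex functional to the old dual norm one changes the ball by \emph{adjoining points}: roughly, the new dual ball is the $w^*$-closed convex hull of (a multiple of) $B_{X^*}$ together with a norm-dense family of points obtained from finite rational combinations of the $x_\alpha^*$, each perturbed in the direction of a ``fresh'' $x_\beta^*$ not occurring in its support. The corresponding $x_\beta\in X$ then defines a $w^*$-open slice that isolates the adjoined point, making it $w^*$-strongly exposed (hence $w^*$-denting), and biorthogonality together with the finiteness of supports guarantees that the different adjoined points do not spoil one another's slices; density of these points yields density of $w^*$-denting points in the new sphere. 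This is where the hypothesis $x_\alpha\in X\subset X^{**}$ is really used --- to make the exposing slices $w^*$-open --- whereas in your scheme biorthogonality is asked to produce moduli of rotundity, a task it cannot perform in the nonseparable setting.
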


The next theorem, due to Todor\v cevi\' c (\cite{tod}, see also \cite[Theorem 4.48]{bos}),
 is the main ingredient of our proof.

\begin{theorem} (MM)
 Every Banach space of density $\om$ has a fundamental biorthogonal system.        \label{todo}
\end{theorem}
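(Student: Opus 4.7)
The plan is to construct the fundamental biorthogonal system $\{(x_\alpha, x_\alpha^*)\}_{\alpha<\om}\subset X\times X^*$ by transfinite induction of length $\om$. I would fix in advance an enumeration $\{y_\alpha\}_{\alpha<\om}$ of a norm-dense subset of $X$ and an enumeration $\{g_\alpha\}_{\alpha<\om}$ of a $1$-norming subset of $S_{X^*}$ (of cardinality $\om$, since $X$ has density $\om$). At stage $\alpha$, with $(x_\beta, x_\beta^*)$ already chosen for $\beta<\alpha$, pick $x_\alpha \in \bigcap_{\beta<\alpha}\ker x_\beta^*$ chosen to approximate $y_\alpha$ modulo the previous closed linear span, then invoke Hahn--Banach to select $x_\alpha^*\in X^*$ vanishing on $\{x_\beta:\beta<\alpha\}$ with $x_\alpha^*(x_\alpha)=1$.

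The purely algebraic part of this induction is routine, since $\bigcap_{\beta<\alpha}\ker x_\beta^*$ has countable codimension and therefore remains a subspace of density $\om$, from which elements outside the previous closed span are easily produced. The substantive issue is \emph{fundamentality}: one must arrange that $x_\alpha$ is actually \emph{close to} $y_\alpha$ modulo the previous closed span, not merely any element of the kernel intersection. This is where MM enters decisively. Following Todor\v cevi\' c, one applies the $P$-ideal dichotomy (a consequence of MM) to an ideal on $[\om]^{\leq\omega}$ that encodes the countable sets of previously-chosen indices which \emph{obstruct} approximation of $y_\alpha$ in $X$. The dichotomy alternative that would block the construction is ruled out using the $1$-norming property of $\{g_\alpha\}$ together with the closedness of bounded subsets of $X^*$ in the weak-$^*$ topology, while the surviving alternative produces an uncountable ``free'' set of indices along which the inductive choices can be driven coherently.

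The principal obstacle is not the induction scheme itself but the construction of the correct set-theoretic encoding. One has to translate the Banach-space-analytic data --- norming of the dual, closedness of linear spans, density of $\{y_\alpha\}$ in $X$ --- into a $\sigma$-directed orthogonal $P$-ideal on $[\om]^{\leq\omega}$ for which PID yields analytically usable information. Once that encoding is calibrated the induction proceeds on general principles, together with standard book-keeping at limit stages of cofinality $\om$; the encoding step itself is where MM is used in an essential way and constitutes the substantive content of Todor\v cevi\' c's theorem. That some axiom beyond ZFC is genuinely required is consistent with the fact that the Kunen compact produced under the continuum hypothesis yields a counterexample to the corresponding conclusion about MIP renormability.
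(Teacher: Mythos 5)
There is a genuine gap, and it is worth noting first that the paper itself does not prove this statement at all: Theorem \ref{todo} is imported verbatim from Todor\v cevi\'c \cite{tod} (see also \cite[Theorem 4.48]{bos}), so the only question is whether your sketch stands on its own. It does not, because the step that carries all the content is named but never performed. The transfinite induction you set up --- choose $x_\alpha\in\bigcap_{\beta<\alpha}\ker x_\beta^*$ approximating $y_\alpha$ modulo $\overline{\mathrm{span}}\{x_\beta:\beta<\alpha\}$ --- is not available in general: nothing guarantees that $y_\alpha$ can be approximated from $\overline{\mathrm{span}}\{x_\beta:\beta<\alpha\}+\bigcap_{\beta<\alpha}\ker x_\beta^*$, and if such choices could always be made the theorem would be a ZFC theorem, contradicting the fact (Kunen's space under CH, as recalled in this paper) that there is consistently a Banach space of density $\omega_1$ with no uncountable biorthogonal system, let alone a fundamental one. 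You correctly locate MM at exactly this point, but the paragraph meant to supply it --- ``apply the $P$-ideal dichotomy to an ideal on $[\omega_1]^{\le\omega}$ encoding the obstructions'' --- never defines the ideal, never verifies that it is a $\sigma$-directed $P$-ideal, and never explains how either alternative of the dichotomy yields the approximations needed at each inductive stage. Your own closing remark that calibrating this encoding ``constitutes the substantive content of Todor\v cevi\'c's theorem'' concedes the point: as written, the proposal is a plan to invoke the theorem being proved, not a proof of it.

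Beyond the missing core, the top-level strategy is also not a faithful route to the known proof. Todor\v cevi\'c's argument does not patch a stage-by-stage Hahn--Banach induction with PID; it applies the Baire-category reformulation of MM (the hypothesis $\mathfrak{m}\mathfrak{m}>\omega_1$) to a partial order of finite approximations to the desired system, the substantive work being the verification that this poset preserves stationary subsets of $\omega_1$ and that the sets one must meet are dense; a fundamental system is then extracted from the resulting uncountable/``long'' structure (this is precisely the mechanism the present paper re-inspects when it builds its $w^*$-$\tau_p$ continuous operator into $c_0(\Gamma)$). Whether the $P$-ideal dichotomy alone suffices for the full statement is not established by your sketch, so even granting the unproved encoding step, the claim that PID ``rules out the blocking alternative'' would itself need an argument.
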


For proofs of the above results, additional references and more information we refer to the recent book \cite{bos}.
The main result of our note is following theorem.

\begin{theorem}
 (MM) Let $X$ be an Asplund space of density $\om.$ Then $X$ admits an equivalent norm with the Mazur
intersection property. \label{main}
\end{theorem}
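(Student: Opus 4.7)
\emph{Proof plan.} The plan is to combine the three ingredients so as to produce, under MM, a shrinking $M$-basis of $X$, whereupon Theorem \ref{bosmip} of Jim\'enez--Moreno yields the MIP renorming. By Theorem \ref{todo} applied to $X$ there is a fundamental biorthogonal system $\{(x_\alpha, f_\alpha)\}_{\alpha < \om}$ in $X \times X^*$, so $G := \{x_\alpha : \alpha < \om\}$ is linearly dense in $X$. The target is a biorthogonal system $\{(g_\alpha, y_\alpha)\}_{\alpha < \om}$ in $X^* \times X^{**}$ with $y_\alpha \in X$ and $\overline{\text{span}}\{g_\alpha\} = X^*$; such a system is exactly the hypothesis of Theorem \ref{bosmip}.

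Since $X$ is Asplund, we build a projectional generator $(W,\Phi)$ on $X$ with $W \subset X^*$ a $\mathbb{Q}$-linear 1-norming set, arranged so that in addition $G$ countably supports $W$. The biorthogonality $f_\alpha(x_\beta) = \delta_{\alpha\beta}$ makes each $f_\alpha$ finitely supported on $G$; starting from the $\mathbb{Q}$-linear span of the $f_\alpha$'s and combining it with the usual Asplund PG data yields such a $W$. Theorem \ref{subo} then delivers a PRI $\{P_\beta\}_{\omega \leq \beta \leq \om}$ on $X$ subordinated to $G$, so that $P_\beta x_\alpha \in \{0, x_\alpha\}$ for all $\alpha, \beta$.

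The subordination forces each $x_\alpha$ to sit in a unique slice $Y_{\beta(\alpha)} := (P_{\beta(\alpha)+1} - P_{\beta(\alpha)})X$, and each slice $Y_\beta$ is separable and Asplund. The classical theory for separable Asplund spaces supplies a shrinking $M$-basis $\{(u_n^\beta, v_n^\beta)\}_{n < \omega}$ on each $Y_\beta$. Setting $U_n^\beta := u_n^\beta \in X$ and $V_n^\beta := v_n^\beta \circ (P_{\beta+1} - P_\beta) \in X^*$ produces a biorthogonal system on $X \times X^*$, with $\{U_n^\beta\}$ linearly dense in $X$ by PRI axiom (iv), and $\{V_n^\beta\}$ linearly dense in $X^*$ by the slice-wise shrinking together with the norm density of $\bigcup_\beta P_\beta^* X^*$ in $X^*$ that comes with a PG-built PRI on an Asplund space. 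Viewed dually, $\{(V_n^\beta, U_n^\beta)\}$ is then a fundamental biorthogonal system in $X^* \times X \subset X^* \times X^{**}$, and Theorem \ref{bosmip} finishes the proof.

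The principal difficulty will be the compatibility step: arranging the PG's norming set $W$ to be countably supported by Todor\v cevi\' c's set $G$. A generic PG for an Asplund space uses essentially all of $X^*$, where no such countability condition holds, so one must build the PG starting from the $f_\alpha$'s in order to stay inside the countably supported regime that Theorem \ref{subo} requires. A secondary but closely related subtlety is establishing the shrinking of the assembled $M$-basis, which reduces to the norm (as opposed to merely $w^*$) density of $\bigcup_\beta P_\beta^* X^*$ in $X^*$ and has to be read off from the particular PG used.
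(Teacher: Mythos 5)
There is a genuine gap, and it sits exactly at the step you yourself flag as ``the principal difficulty'': the construction of a projectional generator on $X$. Asplundness of $X$ provides a projectional generator on the \emph{dual} $X^*$ (the pair $(X,\Phi)$ with $\Phi$ built from Jayne--Rogers selectors, $W=X\subset X^{**}$ being $1$-norming for $X^*$); it does \emph{not} provide a PG on $X$ itself, so there is no ``usual Asplund PG data'' with $W\subset X^*$ to combine with the $f_\alpha$'s. Your fallback, taking $W$ to be (essentially) the $\mathbb{Q}$-span of the $f_\alpha$'s, also fails: the functionals of a fundamental biorthogonal system need not be $1$-norming for $X$ (they need not even separate points), and once you enlarge $W$ by arbitrary elements of $X^*$ to restore norming, the countable-support hypothesis of Theorem \ref{subo} is lost --- this is precisely the difficulty you name but do not resolve. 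The obstruction is not technical: consistently (under CH, for Kunen's $C(K)$) there is an Asplund space of density $\omega_1$ with no uncountable biorthogonal system, hence no PRI on $X$ at all (a PRI yields an uncountable biorthogonal system by picking norm-one vectors in the differences $(P_{\alpha+1}-P_\alpha)X$ and composing norming functionals with $P_{\alpha+1}-P_\alpha$); so no ZFC argument can produce the PRI on $X$ your plan requires, and under MM its existence would itself be a new theorem needing proof. A secondary unjustified point is the asserted norm density of $\bigcup_\beta P_\beta^*X^*$ (the shrinking property of the slice decomposition), which you would also have to establish rather than read off.

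The paper avoids all of this by working on the dual side. It uses Todor\v cevi\'c's system $\{(e_\gamma,f_\gamma)\}$ of $X$ to feed the dual PG: $W=\mathrm{span}\{e_\gamma\}$ is $1$-norming for $X^*$ \emph{because} the system is fundamental, and the subordination set is $G=\{f_\gamma\}\subset X^*$, which countably supports $W$ automatically by biorthogonality (note the roles of vectors and functionals are the reverse of yours). Theorem \ref{subo} then gives a PRI on $X^*$, adjusted so that $P_\alpha^*e_\gamma\in\{0,e_\gamma\}$ compatibly with $P_\alpha f_\gamma\in\{0,f_\gamma\}$, and this yields a $w^*$--$\tau_p$ continuous operator $T:X^*\to c_0(\omega_1)$ with nonseparable range given by evaluations at suitable $e_\gamma$'s. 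The final and essential step, which your plan has no counterpart for, is a second use of Todor\v cevi\'c's MM argument (applied to $X^*$, by inspection of his proof) to convert such an operator into a fundamental biorthogonal system of $X^*$ whose second coordinates lie in $X$, at which point Theorem \ref{bosmip} applies. In effect your proposal attempts the stronger conclusion that $X$ has a shrinking-type M-basis, which is more than the paper proves and more than its tools deliver.
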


\begin{proof}

Because $X$ is an Asplund space, we have  that $\text{dens}X^*=\om$. (One way of seeing this is to use the
Jayne-Rogers selectors of the duality mapping together with the Bishop-Phelps' theorem, see \cite{DGZ} for
details).
 According to Theorem \ref{bosmip}, it suffices to find a fundamental biorthogonal system of $X^*$, with
$\{(x_{\alpha}^*,x_{\alpha})\}_{\alpha < \om} \subset X^* \times X.$ We claim that if there exists a linear continuous
mapping $T:(X^*,w^*) \rightarrow (c_0(\om),\tau_p)$ with a nonseparable range, such that
\begin{equation}
 T(x^*) = (x^*(y_{\gamma}))_{\gamma \in \Gamma}, \quad \textnormal{for some } \Gamma \textnormal{ uncountable, and }
y_{\gamma}\in S_X, \label{oper}
\end{equation}
then $X^*$ has a fundamental biorthogonal system $\{(x_{\alpha}^*,x_{\alpha})\}_{\alpha < \om} \subset X^* \times X.$
The proof of this claim follows by inspection of Todor\v cevi\' c's argument in \cite{tod} (or \cite[Theorem 4.45]{bos})
which shows the same implication without  $w^*$-topology involved. As a matter of fact, during the construction of a
fundamental system, one only passes to a suitable long subsequence of $\{y_\gamma\}$, which then
becomes the $\{x_\alpha\}\subset X\subset X^{**}$.

It remains to find  $\{y_{\gamma}\}_{\gamma \in \Gamma} \subset S_X$, $\Gamma$ uncountable, that define the operator
in (\ref{oper}).
By Theorem \ref{todo}, there exists a fundamental biorthogonal system in $X$, denoted by
$\{(e_{\gamma}, f_{\gamma})\}_{\gamma < \om} \subset X \times X^* $. We may without loss of generality assume that $\|e_\gamma\|=1$
for all $\gamma<\om$. We shall show that the desired sequence
$\{y_{\gamma}\}_{\gamma \in \Gamma}$ can be found as an uncountable subsequence of $(e_{\gamma})_{\gamma < \om},$ (hence
$\Gamma \subset \om$).

Since $X$ is an Asplund space, there exists a projectional generator $(W,\Phi)$ on $X^*,$ where
$W =  \text{span}\{e_{\gamma}\}_{\gamma < \om}$.  Indeed, the set $W$ is dense
in $X$, and thus it is 1--norming for $X^*$.  By \cite[Proposition 8.2.1]{fab}, there exists a projectional generator
$(X, \Phi)$ on $X^*$, so the restriction of $\Phi$ to $W$ will give the needed mapping.

By  Theorem \ref{subo}, there exists a PRI $\{ P_{\alpha}: \omega \leq \alpha \leq \om \}$ and such that
\begin{equation}
  P_{\alpha} (f_{\gamma}) = \left\{ \begin{array}{l}  f_{\gamma}, \textnormal{ or} \\ 0, \end{array}   \right.
\label{subord}
\end{equation}
for all $\omega \leq \alpha \leq \om$ and $\gamma \in \om ,$ since the  set $G=\{f_\gamma\}_{\gamma<\omega_1}$ countably supports $W.$
We claim that we are allowed to put another additional condition on our PRI, namely we require that

\begin{equation}
  P^*_{\alpha} (e_{\gamma}) = \left\{ \begin{array}{l}  e_{\gamma}, \textnormal{ or} \\ 0, \end{array}   \right.
\label{tri}
\end{equation}
and moreover $P_\alpha^*(e_\gamma)=e_\gamma$ if and only if $P_\alpha(f_\gamma)=f_\gamma$.

This is achieved by entering into the construction of PRI using the PG $(W,\Phi)$.
In fact, reading through the proof of Theorem 3.42 in \cite{bos}, we see that the auxiliary multi-valued mapping
$\Psi: X\to 2^W$ may be without loss of generality chosen so that $e_\gamma\in \Psi(f_\gamma)$ for all $\gamma<\om$.
We also adjust the original $\Phi$ (which in the case of an Asplund space may be chosen a sequence of
Jayne-Rogers' selectors restricted to $W$) by adding  finitely many extra elements
$\{f_{\gamma_i}\}_{i=1}^n$ to the set $\Phi(v)$, for every
$v=\sum_{i=1}^n r_i e_{\gamma_i}, \: r_i \in \rls.$
The adjusted pair $(W, \Phi)$ is again a PG for $X^*$.
 In the case of the first adjustment it is obvious, because
the defining properties of the originally chosen $\Psi$ have not been violated. In the second case, it suffices to note
a simple general fact, that for any given PG $(W,\Phi)$, upon replacing $\Phi$ by any countably valued mapping
$\tilde\Phi$, with the
property that $\Phi(f)\subset\tilde\Phi(f)$, the pair $(W,\tilde\Phi)$ will remain a PG. This is apparent from the
definition of PG, since $\tilde\Phi(B)\supseteq\Phi(B)$ implies $(\tilde\Phi(B))^\perp\subseteq\Phi(B)^\perp$.
Having these extra conditions at hand, it is easy to observe that the set $B_\alpha$ in the proof of Theorem 3.42 in
\cite{bos} equals to the $\kve$--span of the (countable) set $\{e_\lambda\}_{\lambda\in\Lambda_\alpha}$, where
$\Lambda_\alpha=\{\gamma: e_\gamma\in B_\alpha\}$. This implies, using Goldstine's theorem, that
$P^*_\alpha(X^{**})=\clsp \{e_\lambda: \lambda\in\Lambda_\alpha\}$ and also
$(\text{Id}-P^*_\alpha)(X^{**})=\clsp \{e_\lambda: \lambda\notin\Lambda_\alpha\}$.
Thus $P^*_\alpha(e_\gamma)=e_\gamma$ if $\gamma\in\Lambda_\alpha$ and $P^*_\alpha(e_\gamma)=0$ otherwise.
Denote $\Gamma_\alpha=\{\gamma<\om : f_{\gamma} \in (P_{\alpha+1} - P_{\alpha}) X^*\}$.
Let $\Gamma=\{\alpha: \Gamma_\alpha\ne\emptyset\}$.
We claim that $\text{card}\Gamma=\om$. Indeed, $P_\alpha(X^*)$ is a separable Banach space for every $\alpha<\om$,
while $\text{span}\{f_\gamma\}_{\gamma<\om}$ is nonseparable. For convenience we may without loss of generality assume that
$\Gamma=[\omega,\om)$. For every $ \alpha\in\Gamma$ we choose  $\gamma(\alpha)< \om$, such that
$\gamma(\alpha)\in\Gamma_\alpha$.
Since the mapping
$$x^* \mapsto \left(\| (P_{\alpha+1} - P_{\alpha}) (x^*) \|\right)_{\omega \leq \alpha < \om} $$
is into $c_0\left( [\omega,\om)\right)$ (Lemma VI.1.2 in \cite{DGZ}), we have that the mapping
$$T: x^* \mapsto \left(e_{\gamma(\alpha)} ((P_{\alpha+1} - P_{\alpha})(x^*))  \right)_{\alpha \in \Gamma} $$
is into $c_0(\Gamma).$
However, it is a consequence of (\ref{tri}) that $e_{\gamma(\alpha)}=e_{\gamma(\alpha)}\circ (P_{\alpha+1}-P_\alpha)$ on $X^*$.
It follows that
$$ T: x^* \mapsto   \left(x^*(e_{\gamma})\right)_{\gamma \in \Gamma} $$
is a $w^*-\tau_p$ continuous operator from $X^*$ into $c_0(\Gamma)$.
Further, from (\ref{subord}) and the choice of $\Gamma,$ we get that the range of $T$ in not
 separable. This finishes the proof.

\end{proof}

We will conclude the paper by making a few remarks. Note that on the one hand, the dual to our original Asplund space,
$X^*$, admits a PG, so it has  an M-basis, and thus it has a bounded injection into $c_0(\om)$
 (for these facts see \cite{bos}). On the other hand, $X^*$ in general does not admit injections into $c_0(\om)$,
that are continuous in the $w^*-\tau_p$ topology,
since the last condition characterizes WCG spaces $X$, and not every Asplund space
of density $\om$ is WCG. The main point of our construction is that under MM one can actually
 construct a "large" (nonseparable range) operator from $X^*$ into $c_0(\om)$, which is at the
same time $w^*-\tau_p$ continuous.

\end{document}